\theoremstyle{plain}
\newtheorem{lema}{Lemma}
\newtheorem{prop}[lema]{Proposition}
\newtheorem{teo}[lema]{Theorem}
\newtheorem{conj}[lema]{Conjecture}
\newtheorem{coro}[lema]{Corollary}
\theoremstyle{remark}
\theoremstyle{definition}
\newtheorem{prob}{Problem}
\newcommand{\Z}{\mathbb{Z}}
\newcommand{\x}{\mathcal{X}}
\newcommand{\aut}{\textrm{Aut}}
\newcommand\restr{\raisebox{-0.3ex}{$|$}\raisebox{0.3ex}{}}
\begin{document}

\title[Automorphism groups of finite posets II]{Automorphism groups of finite posets II}

\author[J.A. Barmak]{Jonathan Ariel Barmak $^{\dagger}$}

\thanks{$^{\dagger}$ Researcher of CONICET. Partially supported by grant PICT-2017-2806, PIP 11220170100357CO, UBACyT 20020160100081BA}

\address{Universidad de Buenos Aires. Facultad de Ciencias Exactas y Naturales. Departamento de Matem\'atica. Buenos Aires, Argentina.}

\address{CONICET-Universidad de Buenos Aires. Instituto de Investigaciones Matem\'aticas Luis A. Santal\'o (IMAS). Buenos Aires, Argentina. }

\email{jbarmak@dm.uba.ar}

\begin{abstract}
We prove that every finite group $G$ can be realized as the automorphism group of a poset with $4|G|$ points. We also provide bounds for the minimum number of points of a poset with cyclic automorphism group of a given prime power order. 
\end{abstract}

\subjclass[2010]{05E18,06A11,20B25.}

\keywords{Automorphism group, finite posets.}

\maketitle


\section{Introduction} \label{content}

In 1946 Birkhoff \cite{Bir} proved that if $G$ is a finite group, then there exists a poset $P$ of $|G|(|G|+1)$ points whose automorphism group is isomorphic to $G$. In 1948 Frucht \cite{Fru4} improved Birkhoff's bound showing that $P$ can be taken with $|G|^2$ points. In 1950 Frucht \cite{Fru3} proved that $P$ can be constructed with only $|G|(d+2)$ points if $G$ admits a generating set of cardinality $d$. In 1972 Thornton \cite{Tho} gave another construction with $|G|(2d+1)$ points. Apparently Thornton was not aware of Frucht's result in \cite{Fru3}. In 2009 Barmak and Minian \cite{BM} modified Thornton's construction to define a poset with $|G|(d+2)$ points. They were unaware of Frucht's result as well, and their construction turned out to be essentially the same as his.

At the same time, similar but stronger results were obtained in the context of graphs. In 1938 Frucht \cite{Fru} answered the original question by K\"onig: every finite group can be realized as the automorphism group of a finite graph. Moreover, Frucht proved in \cite{Fru2} that such a graph can be taken with $2d|G|$ vertices if $G$ is not cyclic (otherwise $3|G|$ vertices suffice). Sabidussi \cite{Sab} improved this result by showing that there is a graph realizing $G$ with $O(|G| \log d)$ vertices. For $G=\Z_3, \Z_4, \Z_5$ there are graphs with $9,10,15$ vertices respectively and automorphism group isomorphic to $G$. Surprisingly, in 1974 Babai \cite{Bab} managed to remove completely the parameter $d$ from the statement by proving that for any finite group $G$ different from those three cases, there is graph $\Gamma$ of just $2|G|$ vertices and $\aut (\Gamma)\simeq G$. A key idea in Babai's proof is the use of a minimal generating set for $G$.

There is a direct connection between graphs and posets. If $\Gamma$ is a graph then the face poset $\x (\Gamma)$ is the poset of vertices and edges in $\Gamma$ where every edge is greater than its two vertices. It is easy to see that $\aut (\x(\Gamma)) \simeq \aut (\Gamma)$. In particular every group can be realized as the automorphism group of a poset of height $1$. The number of edges of the graph $\Gamma$ constructed by Babai is greater than $d|G|$. Thus, this provides another proof that there is a poset realizing $G$ with more than $d|G|$ points (see also \cite{Fru5}). On the other hand, Babai and Goodman proved \cite[Theorem 4.7]{BG} that there exists a constant $c<293$ such that any group can be realized by a connected graph $\Gamma$ with at most $c|G|$ edges. In particular any finite group $G$ is the automorphism group of a poset with $587|G|$ points. If we add a minimum and a maximum to $\x (\Gamma)$, we obtain a lattice $L$ with at most $587|G|$ points and $\aut (L)\simeq G$. Although the minimum number of edges in a graph realizing $G$ is bounded above by $c|G|$, the minimum number of edge orbits of such graphs is not bounded. Concretely, Goodman proved in \cite{Goo} that for every constant $k$ there exists a finite group $G$ such that any graph realizing $G$ has more than $k$ edge orbits. However, Babai and Goodman state in \cite{BG} the following conjecture.

\begin{conj} \cite[Conjecture 4.13]{BG} \label{conjbg}
There exists a constant $k$ such that for any finite group $G$ there is a lattice $L$ realizing $G$ with at most $k$ orbits.
\end{conj}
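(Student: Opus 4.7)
Conjecture \ref{conjbg} is, to my knowledge, open, so what follows is a plan of attack rather than a proof sketch I am confident in.

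The natural starting point is the Babai--Goodman graph $\Gamma$ with at most $c|G|$ edges realizing $G$: passing to the face poset $\x(\Gamma)$ and adjoining a minimum $\hat 0$ and a maximum $\hat 1$ already yields a lattice $L_0$ of size at most $(c+1)|G|+2$ with $\aut(L_0)\cong G$, so the difficulty is entirely in the number of orbits, not the number of elements. Goodman's theorem forbids a purely cosmetic fix of $L_0$: the edge orbits of $\Gamma$ are already unbounded. The plan is therefore to abandon the idea of giving each generator its own ``private'' gadget, and instead to attach the same rigid auxiliary lattice to every group element, distinguishing generators only by how these auxiliary lattices are glued to one another.

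Concretely, fix once and for all a small lattice $\Sigma$ with $\aut(\Sigma)=1$ and, for each $g\in G$, glue a copy $\Sigma_g$ along a distinguished atom to a vertex element $v_g$. To encode the Cayley edge between $v_g$ and $v_{gs}$ labelled by a generator $s$, impose a specific meet/join identification between prescribed elements of $\Sigma_g$ and $\Sigma_{gs}$, drawn from a fixed alphabet of identification ``types''. If only boundedly many types are needed, the number of $G$-orbits in the resulting lattice $L$ is bounded by $|\Sigma|$ plus the number of types, independently of $G$. The rigidity of $\Sigma$ should force any automorphism of $L$ to permute the blocks $\Sigma_g$, and the identification pattern---a Cayley graph of $G$ in disguise---should then cut $\aut(L)$ down to $G$ itself.

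The main obstacle is that the obvious way to encode the $|S|$ distinct generators from a constant alphabet is by a binary or hierarchical gadget, which immediately reintroduces $\Omega(\log|S|)$ orbit types and so reproduces the very Goodman phenomenon one is trying to avoid. One needs a genuinely lattice-theoretic device---something that meets and joins can do but graph edges cannot---to distinguish unboundedly many ``channels'' using only $O(1)$ orbit types. A secondary difficulty is preserving the lattice structure after the identifications: meets and joins propagate readily and can create unintended collapses that either break the rigidity of the $\Sigma_g$'s or destroy the disguise of the Cayley pattern. Overcoming these two issues is, in my view, the real content of the conjecture; the verification that $\aut(L)=G$ and that the orbit count is bounded would then be a routine combinatorial check.
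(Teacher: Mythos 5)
You have correctly identified the status of this statement: it is an open conjecture, quoted verbatim from \cite{BG}, and the paper does not prove it either. What the paper does is settle the \emph{poset} version, via Theorem \ref{main}: a poset on $G\times\{0,1,2,3\}$ on which $G$ acts freely, hence with exactly $4$ orbits. So there is no ``paper's own proof'' to measure your plan against, and your framing of the problem --- that the size bound is already available from the Babai--Goodman graph and the whole difficulty is the orbit count --- is accurate.

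It is still worth contrasting your plan with what the paper actually does for posets, because the paper's construction supplies precisely the ``genuinely non-graph device'' you say is missing. In the poset of Theorem \ref{main}, a single maximal point $(g,3)$ covers on the order of $d$ points $(gh_k,1)$ and $(gh_k,2)$, and a single height-$1$ point covers about $d$ minimal points: the covering relation of a poset has unbounded arity, so all $d$ generators of a minimal generating set are encoded inside \emph{one} orbit of covering relations rather than by $d$ distinguishable gadgets. Rigidity is then extracted not from attached asymmetric blocks but from the minimality of the generating set (the adjacency analysis of the points $(h_k,1)$, which recovers the linear order $h_1,\ldots,h_d$ and pins down a fixed point). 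This is exactly how the $\Omega(\log|S|)$ blow-up you worry about is avoided, with no hierarchical encoding at all. The part of your plan that remains genuinely unresolved --- and is the real content of Conjecture \ref{conjbg} --- is whether any such high-arity encoding survives the lattice axioms: the poset of Theorem \ref{main} is not a lattice (distinct points routinely have several maximal common lower bounds), and adjoining $\hat 0$ and $\hat 1$ does not repair this. Your proposed meet/join identifications would have to be checked against precisely that failure mode, which you correctly flag as the main obstacle; as a statement of where the difficulty lies, your proposal is sound, but it is a research programme, not a proof.
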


In this article we build on Babai's work in \cite{Bab} to prove the following theorem, which improves previous results and settles the poset version of Conjecture \ref{conjbg}.      

\begin{teo} \label{main}
Let $G$ be a finite group. Then there exists a poset $P$ with $4|G|$ points whose automorphism group $\aut(P)$ is isomorphic to $G$. Moreover, the action of $G$ on $P$ is free, that is it has $4$ orbits.
\end{teo}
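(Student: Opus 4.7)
The approach is to adapt Babai's graph construction \cite{Bab} to the poset setting, with just enough additional structure to drop the dependence on the generating set. I would parameterize the candidate poset as follows: take the underlying set $P = (G \times \{A,B\}) \sqcup (G \times \{C,D\})$, with $(g,A),(g,B)$ declared to be minimal and $(g,C),(g,D)$ maximal, and let $G$ act on $P$ by right multiplication on the first coordinate. Any $G$-equivariant order on $P$ is then determined by four subsets $S_{XY} \subseteq G$ (one for each $X \in \{A,B\}$, $Y \in \{C,D\}$) via the rule $(g,X) < (h,Y) \iff hg^{-1} \in S_{XY}$. By construction $G$ embeds in $\aut(P)$, acts freely, and has exactly four orbits; the task is to choose the subsets so that the embedding is surjective.

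\textbf{Step 1 (Encoding Babai's graph).} Apply Babai's theorem to get a graph $\Gamma$ on a vertex set $V_1 \sqcup V_2$ of size $2|G|$ with $\aut(\Gamma) \cong G$ acting freely and with $V_1, V_2$ as its two vertex orbits. Identify $V_1 \leftrightarrow G \times \{A\}$ and $V_2 \leftrightarrow G \times \{B\}$ equivariantly. Choose $S_{AC}$ and $S_{BC}$ so that the down-set of $(g,C)$ in $G \times \{A,B\}$ records the closed $\Gamma$-neighborhood of the vertex corresponding to $(g,A)$, i.e.\ the single top orbit $C$ encodes the entire edge set of $\Gamma$. This uses the fact that, by $G$-equivariance, a ``closed neighborhood'' centered at a $G$-orbit representative already encodes the neighborhoods at every other vertex in the orbit.

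\textbf{Step 2 (Rigidification via $D$).} Use the fourth orbit $D$ to break residual symmetries. Specifically, pick $S_{AD}$ and $S_{BD}$ so that (i) the multisets of cardinalities $|S_{AC}|, |S_{BC}|, |S_{AD}|, |S_{BD}|$ and the bipartite degree sequences of the four relations are asymmetric enough to forbid any height-preserving swap $A \leftrightarrow B$ or $C \leftrightarrow D$, and (ii) the combined incidence data makes $\Gamma$ twin-free, so no two elements in the same bottom orbit have identical up-sets. Then any $\phi \in \aut(P)$ must preserve all four orbits setwise.

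\textbf{Step 3 (Identification with $G$).} Once $\phi$ preserves the four orbits, its restriction to $G \times \{A,B\}$ is a permutation sending closed $\Gamma$-neighborhoods to closed $\Gamma$-neighborhoods; twin-freeness of $\Gamma$ forces this permutation to be an automorphism of $\Gamma$, hence an element of $G$. Equivariance then pins down $\phi$ on the top as well, giving $\aut(P) \cong G$. Finally, the three exceptional groups $\Z_3, \Z_4, \Z_5$ excluded from Babai's theorem are dispatched by ad hoc constructions on $12, 16, 20$ points respectively.

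\textbf{Main obstacle.} The delicate part is Step 2: the four subsets $S_{XY}$ must simultaneously (a) faithfully encode Babai's $\Gamma$ on the $A,B,C$ side, (b) introduce enough asymmetry through $D$ to prevent orbit swaps, and (c) keep the global structure twin-free. Getting all three compatible requires a careful choice of gadget — essentially, a poset-theoretic analogue of Babai's ``identifying'' subgraphs, where the role played by extra edges in Babai's proof is now played by the down-sets of the $D$-orbit.
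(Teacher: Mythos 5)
Your proposal is a plan rather than a proof, and the plan leaves unfilled exactly the step that carries all the content. In Step 2 you need four subsets $S_{AC},S_{BC},S_{AD},S_{BD}\subseteq G$ that simultaneously encode $\Gamma$, kill the orbit swaps, and make the structure twin-free; you never exhibit them, and you yourself flag this as ``the main obstacle.'' Without a concrete choice (and a verification), nothing has been proved. Step 3 also contains an unjustified inference: if the down-sets of the orbit $C$ encode the closed neighborhoods of the $V_1$-vertices of $\Gamma$, then an automorphism of $P$ only gives a permutation $\sigma$ of $V(\Gamma)$ together with a permutation $\tau$ of $C$ satisfying $\sigma(N[v])=N[\tau(v)]$. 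That is strictly weaker than $\sigma$ being a graph automorphism: to recover adjacency you must be able to identify the \emph{center} of each closed neighborhood inside the poset and show $\tau$ is induced by $\sigma$, and ``twin-freeness'' alone does not do this (this is essentially the neighborhood-hypergraph reconstruction problem). Moreover, your orbit $C$ only records neighborhoods centered in $V_1$, so any edges of $\Gamma$ internal to $V_2$ are simply not encoded in $P$ at all; you would need to check that Babai's graph has no such edges, or encode them elsewhere. Finally, note that you are committing to a poset of height $1$, which is a genuinely harder target than necessary.

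For comparison, the paper does not use Babai's graph as a black box at all; it borrows only his key idea of working with a \emph{minimal} generating set $H=\{h_1,\dots,h_d\}$. The poset is $G\times\{0,1,2,3\}$ of height $3$, built directly from $H$: the covering relations are defined by explicit products of consecutive generators, and minimality of $H$ guarantees that the ``adjacency'' graph on the height-$1$ points below a fixed maximal point $(g,3)$ is a path $(e,1),(h_1,1),\dots,(h_d,1)$ with an identifiable endpoint. This rigidity forces any automorphism fixing $(e,3)$ to fix everything, which is the whole proof. If you want to salvage your approach you would have to supply the explicit gadget for $D$ and a genuine reconstruction argument in Step 3; as it stands the proposal does not establish the theorem.
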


The minimum number of vertices in a graph realizing $G$ has been determined by Arlinghaus \cite{Arl} for every abelian finite group $G$ using results by Sabidussi and Meriwether. Meriwether solved the case $G$ cyclic of prime power order by fixing some errors in results by Sabidussi. The analogous result for posets has not yet been settled, to the best of our knowledge. In Section \ref{abel} we prove the first general result in this context, establishing lower and upper bounds for the minimum number of points in a poset with automorphism group $\Z_{p^k}$. 

\section{Proof of Theorem \ref{main}}

\begin{proof}
Let $H=\{h_1,h_2,\ldots, h_d\}$ be a minimal generating set of $G$, i.e. no proper subset generates $G$. By \cite{BM} and \cite{Fru3} the result holds for $d\le 2$, so assume $d\ge 3$. We consider first the case that $d$ is odd.

Define $h_0=h_{-1}=e\in G$. The underlying set of $P$ is $G\times \{0,1,2,3\}$. The set of minimal points in $P$ is $G\times \{0\}$. If $g\in G$, $(g,1)$ covers exactly $d+1$ minimal points: all the elements of the form $(gh_{i+1}^{-1}h_i,0)$, for $-1\le i\le d-1$. The point $(g,2)$ covers just one element, $(g,1)$. The point $(g,3)$ covers the points $(gh_k,2)$ for $0\le k\le d$ even and the points $(gh_k,1)$ for $0\le k \le d$ odd (see Figure \ref{ug}). 

\begin{figure}[h] 
\begin{center}
\includegraphics[scale=0.6]{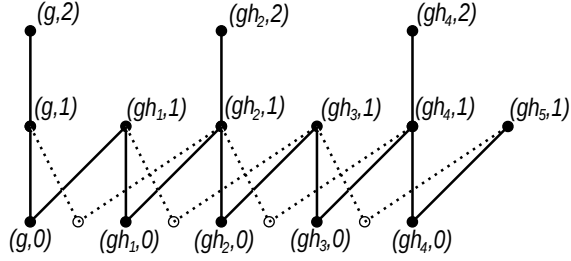}
\caption{The subposet $P_{<(g,3)}$ for $d=5$. There are many minimal points missing in this picture, each point of height $1$ covers $6$ points. The dotted lines and the empty circles represent edges and points which could be part of the poset or not, namely there could be a point smaller than $(gh_k,1)$ and $(gh_l,1)$ if $|k-l|=2$, but not if $|k-l|\ge 3$.}\label{ug}
\end{center}
\end{figure}

We will say that two points $(g,1)$ and $(g',1)$ of height $1$ are \textit{adjacent} if there exists a point which is smaller than both of them. Note that $(h_k,1)$ and $(h_{k+1},1)$ are adjacent for every for every $0\le k\le d-1$. Indeed, $h_kh_{0}^{-1}h_{-1}=h_k=h_{k+1}h_{k+1}^{-1}h_k$. On the other hand $(h_k,1)$ and $(h_l,1)$ are not adjacent if $|k-l|\ge 3$ ($k,l\ge 0$). This is tedious but follows from the minimality of $H$. Suppose $h_kh_{i+1}^{-1}h_i=h_lh_{j+1}^{-1}h_j$. If $i=-1$, then $h_k\in \langle h_l,h_j,h_{j+1} \rangle$ and $h_l\in \langle h_k,h_j,h_{j+1} \rangle$. At least one of these contradicts the minimality of $H$. The same happens if $j=-1$. If $i=0$, $h_kh_1^{-1}=h_lh_{j+1}^{-1}h_j$, which contradicts again minimality. The same holds if $j=0$. If $i,j\ge 1$, without loss of generality suppose $i<j$. Then $h_i$ must appear more than once in the expression above, so $k=i$ or $l=i$. In any case, $h_{i+1}$ must be repeated as well, so $j=i+1$. And finally $j+1=l$ or $j+1=k$, so $|k-l|=2$, a contradiction.


Note that $G$ acts freely on $P$ by left multiplication on the first coordinate. This shows that $G$ is a subgroup of $\aut (P)$. In order to prove that there are no more automorphisms it suffices to show that if $\varphi \in \aut(P)$ fixes $(e,3)$, then it is the identity. Indeed, if $\psi$ is any automorphism of $P$, since $(e,3)$ is maximal, so is $\psi (e,3)$, and then $\psi(e,3)=(g,3)$ for some $g\in G$. Let $m_g$ be the automorphism of $P$ given by left multiplication by $g$ in the first coordinate. Then $m_g^{-1}\psi$ fixes $(e,3)$ and proving $m_g^{-1}\psi=1_P$ implies that $\psi=m_g$.

Assume then that $\varphi \in \aut(P)$ fixes $(e,3)$. Then $\varphi$ induces an automorphism of the subposet $P_{<(e,3)}$. Since $\varphi$ preserves heights, $(\{e\}\cup H) \times \{1\}$ is mapped into itself. Moreover, this restriction maps adjacent points to adjacent points. If $0\le k \le d$ is odd, then $(h_k,1)$ is maximal in $P_{<(e,3)}$, so it is mapped to another maximal point $(h_l,1)$ with $l$ odd.

Since $d$ is odd, $(e,1)$ is the unique non-maximal point of height $1$ in $P_{<(e,3)}$ which is adjacent to exactly one maximal point: $(h_1,1)$. Indeed $(h_k,1)$ is adjacent to $(h_{k-1},1)$ and $(h_{k+1},1)$, which are maximal when $k$ is even. Therefore $\varphi (e,1)=(e,1)$. Now, $(h_1,1)$ is the unique maximal point which is adjacent to $(e,1)$ and thus it is also fixed by $\varphi$. In general, once we have proved that $(h_l,1)$ is fixed for every $l\le k$, then $(h_{k+1},1)$ is the unique maximal or non-maximal point $(h_r,1)$ with $r\ge k+1$ which is adjacent to $(h_k,1)$ and so it is fixed as well. We deduce that $\varphi$ fixes all the points of height $1$ in $P_{<(e,3)}$, and it is easy to see that then it also fixes $(e,2)$. In general, if $\varphi$ fixes $(g,3)$, then it fixes $(g,2)$, $(g,1)$ and moreover $(gh_k,1)$ for every $0\le k \le d$. Conversely, if $(g,1)$ is fixed, then $(g,3)$ is fixed (if $\varphi(g,3)=(g',3)$, take $m_{g(g')^{-1}}$ the left multiplication by $g(g')^{-1}$. Then $m_{g(g')^{-1}}\varphi$ fixes $(g,3)$, so it fixes $(g,1)$, and then $g'=g$). In conclusion, $(g,3)$ fixed implies $(gh_k,3)$ fixed for every $k$. Since $(e,3)$ is fixed and $H$ generates $G$, all the points $(g,3)$ are fixed, and then also $(g,2)$ and $(g,1)$. It only remains to prove that the points $(g,0)$ are fixed. It suffices to prove that $(e,0)$ (and then every minimal point in $P$) is determined by the points that cover it. Suppose $e\neq g\in G$ is such that $\{e,h_1, h_1^{-1}h_2, h_2^{-1}h_3, \ldots, h_{d-1}^{-1}h_d\}=\{g,gh_1, gh_1^{-1}h_2, gh_2^{-1}h_3, \ldots, gh_{d-1}^{-1}h_d\}$. Note that $e$ and $h_1$ differ by a right multiplication by $h_1$. Now, if $i\ge 1$, $gh_i^{-1}h_{i+1}h_1\neq gh_j^{-1}h_{j+1}$ for every $j\ge -1$, by minimality of $H$. And $gh_1^2=gh_j^{-1}h_{j+1}$ only if $j=-1$ and $h_1^2=e$. Thus, we must have $e=gh_1$, $h_1=g$. But then $h_2=gh_1^{-1}h_2\in \{e,h_1, h_1^{-1}h_2, h_2^{-1}h_3, \ldots, h_{d-1}^{-1}h_d\}$, which is absurd by minimality of $H$. This finishes the proof of the case $d$ odd.

The case $d$ even is very similar. The definition of $P$ changes only for points of height $1$ and $3$: $P=G\times \{0,1,2,3\}$, the points $(g,0)$ are minimal. If $g\in G$, $(g,1)$ covers now $d$ minimal points: $(g,0)$ and the points $(gh_{i+1}^{-1}h_i,0)$, for $1\le i\le d-1$. The point $(g,2)$ just covers $(g,1)$. The point $(g,3)$ covers $(g,2)$, the points $(gh_k,2)$ for $1\le k\le d$ odd and the points $(gh_k,1)$ for $1\le k \le d$ even. 

As before $(h_k,1)$ and $(h_{k+1},1)$ are adjacent for $1\le k\le d-1$, and $(h_k,1), (h_l,1)$ are non-adjacent for $k,l\ge 1$ and $|k-l|\ge 3$. Moreover, $(e,1)$ is not adjacent to $(h_k,1)$ for any $k\ge 1$. Assume $\varphi \in \aut (P)$ fixes $(e,3)$. It induces a bijection on $(\{e\}\cup H) \times \{1\}$. Since $(e,1)$ is the unique point of height $1$ in $P_{<(e,3)}$ which is not adjacent to any other point ($d\ge 2$), it is fixed by $\varphi$. Now, since $d$ is even, $(h_1,1)$ is the unique non-maximal point of height $1$ which is adjacent to just one maximal point, so it is also fixed. And the proof continues as in the previous case, showing that $(h_k,1)$ is fixed for every $1\le k \le d$. Of course, $(e,2)$ is also fixed. To finish the proof we will prove that the point $(e,0)$ is determined by the set of points which cover it. Suppose $\{e, h_1^{-1}h_2, h_2^{-1}h_3, \ldots, h_{d-1}^{-1}h_d \}=\{g, gh_1^{-1}h_2, gh_2^{-1}h_3, \ldots, gh_{d-1}^{-1}h_d \}$ for some $g\neq e$. The elements $e$ and $h_1^{-1}h_2$ differ by a right multiplication by $h_1^{-1}h_2$. But if $i\ge 2$ then $gh_{i}^{-1}h_{i+1}h_1^{-1}h_2\neq gh_j^{-1}h_{j+1}$ for every $j\ge 1$ and $gh_{i}^{-1}h_{i+1}h_1^{-1}h_2\neq g$, by minimality of $H$. On the other hand $gh_{1}^{-1}h_{2}h_1^{-1}h_2\neq gh_j^{-1}h_{j+1}$ for every $j\ge 1$. Thus, we must have $gh_{1}^{-1}h_{2}h_1^{-1}h_2=g$, and $g=h_1^{-1}h_2$. In particular $h_1^{-1}h_3=gh_2^{-1}h_3\in \{e, h_1^{-1}h_2, h_2^{-1}h_3, \ldots, h_{d-1}^{-1}h_d \}$, which is absurd, again by minimality of $H$.
\end{proof}

The proof is easier if we replace $4|G|$ by $5|G|$ in the statement, and one does not need to divide in two cases. We do not know if the bound $4|G|$ can be replaced by $3|G|$. It cannot be replaced by $2|G|$ in general ($G=\Z_3, \Z_5, \Z_7$ require $3|G|$ points at least, see next section). It would be nice to know if there is an infinite family for which the bound $2|G|$ fails (perhaps for cyclic groups of prime order. It is easy to see that these groups achieve the bound given by Babai in the context of graphs: a graph realizing $\Z_p$ contains at least $2p$ points if $p\ge 3$ is a prime).

By the correspondence between finite posets and finite $\textrm{T}_0$ topological spaces \cite{Ale} we deduce the following.

\begin{coro}
Given a finite group $G$, there exists a topological space with $4|G|$ points whose homeomorphism group is isomorphic to $G$.
\end{coro}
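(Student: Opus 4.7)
The plan is to invoke the Alexandroff correspondence \cite{Ale} together with Theorem \ref{main}. Recall that this correspondence is a bijection between finite $\textrm{T}_0$ topological spaces and finite posets sharing the same underlying set: given a finite poset $P$, one declares the up-sets to be the open sets (equivalently, the minimal open neighborhood of $x$ is $\{y : y \ge x\}$), and conversely, given a finite $\textrm{T}_0$ space $X$ one recovers a partial order by declaring $x \le y$ if and only if $x$ belongs to every open set containing $y$. Under this correspondence, a map between finite $\textrm{T}_0$ spaces is continuous if and only if it is order-preserving for the associated posets; in particular, self-homeomorphisms of the space correspond bijectively, and compatibly with composition, to poset automorphisms.

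Given this machinery, the proof is immediate. First I would apply Theorem \ref{main} to obtain a poset $P$ with $|P| = 4|G|$ and $\aut(P) \simeq G$. Then I would let $X$ denote the finite $\textrm{T}_0$ topological space associated to $P$ via the Alexandroff correspondence. The space $X$ has $4|G|$ points by construction, and its homeomorphism group satisfies $\textrm{Homeo}(X) \simeq \aut(P) \simeq G$.

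There is essentially no obstacle here, since all the content lies in Theorem \ref{main}; the corollary is purely a translation via a standard equivalence of categories. The only point worth emphasizing in the write-up is the functoriality of the correspondence, so that homeomorphisms of $X$ are genuinely in bijection with automorphisms of $P$, but this is classical and requires no additional argument.
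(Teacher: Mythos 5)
Your argument is exactly the paper's: the corollary is deduced in one line from Theorem \ref{main} via the Alexandroff correspondence between finite posets and finite $\textrm{T}_0$ spaces \cite{Ale}, under which homeomorphisms correspond to poset automorphisms. Your write-up simply makes that standard translation explicit, which is correct and requires nothing further.
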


\section{Abelian groups} \label{abel}

By \cite[Theorem]{BM} and \cite{Fru3} the cyclic group $\Z_n$ of order $n$ can be realized as the automorphism group of a poset $P_n$ of $3n$ points. If $G$ is finite abelian, $G\simeq \Z_{n_1}\oplus \Z_{n_2}\oplus \ldots \oplus \Z_{n_d}$, then the join (or ordinal sum) of the posets $P_{n_i}$ (the disjoint union of the posets with $x<y$ for every $x\in P_{n_i}$ and $y\in P_{n_{i+1}}$) is a poset of order $3\sum n_i\le 3|G|$ whose automorphism group is isomorphic to $G$. In particular we deduce the following

\begin{prop}
If $G$ is a finite abelian group, there exists a poset of order $3|G|$ whose automorphism group is isomorphic to $G$.
\end{prop}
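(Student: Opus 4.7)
The approach is exactly the one sketched in the paragraph preceding the statement: use the structure theorem to decompose $G$ into cyclic factors, apply the known $3n$ bound for each cyclic factor, and glue the pieces together by an ordinal sum. Concretely, I would write $G \simeq \Z_{n_1} \oplus \ldots \oplus \Z_{n_d}$ and invoke \cite{BM} together with \cite{Fru3} to obtain posets $P_{n_i}$, each with $3n_i$ points and $\aut(P_{n_i}) \simeq \Z_{n_i}$. Forming the ordinal sum $P = P_{n_1} * P_{n_2} * \ldots * P_{n_d}$ gives $|P| = 3\sum_i n_i \le 3\prod_i n_i = 3|G|$, so the only remaining task is to identify $\aut(P)$ with $\prod_i \aut(P_{n_i}) \simeq G$.

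The key lemma is that for nonempty finite posets $Q_1, Q_2$ one has $\aut(Q_1 * Q_2) \simeq \aut(Q_1) \times \aut(Q_2)$; the general case then follows by induction on $d$. The inclusion $\supseteq$ is immediate from the definition of the ordinal sum. For the other inclusion, given $\varphi \in \aut(Q_1 * Q_2)$, I would show that $\varphi(Q_1) = Q_1$ and hence $\varphi(Q_2) = Q_2$, so $\varphi$ decomposes as a pair of automorphisms on the factors. Suppose for contradiction that some $x \in Q_1$ satisfies $\varphi(x) \in Q_2$; then $\varphi(x)$ sits above every element of $Q_1$ in $Q_1 * Q_2$, so $x$ sits above $\varphi^{-1}(y)$ for every $y \in Q_1$. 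Since in $Q_1 * Q_2$ the elements of $Q_2$ lie strictly above $x$, this forces $\varphi^{-1}(y) \in Q_1$ for every $y \in Q_1$. A cardinality count then gives $\varphi^{-1}(Q_1) = Q_1$, contradicting $\varphi(x) \in Q_2$.

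There is no serious obstacle: once the ordinal-sum lemma is in hand, the proposition is an immediate consequence of the existing cyclic construction and the structure theorem, and the only mild bookkeeping item is the inequality $\sum_i n_i \le \prod_i n_i$ (trivial since each $n_i \ge 2$ after absorbing any trivial factors, or handled separately when $G$ is trivial).
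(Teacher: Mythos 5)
Your proposal is correct and follows exactly the paper's argument: decompose $G$ into cyclic factors via the structure theorem, realize each $\Z_{n_i}$ by a $3n_i$-point poset from \cite{BM} and \cite{Fru3}, and take the ordinal sum, using $\sum n_i\le \prod n_i$. The only difference is that you spell out the (correct) verification that $\aut(Q_1\ast Q_2)\simeq \aut(Q_1)\times\aut(Q_2)$, which the paper leaves implicit.
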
  

The bound $3|G|$ cannot be replaced in general by $2|G|$, as we already said. If $G=\Z_3$ and $P$ realizes $G$ with minimum number of points, then there is an orbit $\mathcal{O}$ in $P$ with $3$ elements. Recall that the orbit of any group action on a finite poset is discrete (any two points are not comparable). If every other orbit has $1$ element, then all the elements in $\mathcal{O}$ have the same points above and the same points below. Thus, any permutation of $\mathcal{O}$ which fixes the remaining points is an automorphism and then $|\aut (P)|\ge 6$, a contradiction. There exists then a second orbit $\mathcal{O}'$ with $3$ elements. Depending on the number of elements in $\mathcal{O}'$ which are comparable to each element in $\mathcal{O}$, the subposet $Q$ of $P$ given by these $6$ points is isomorphic to one of the four posets in Figure \ref{cuatro}.

\begin{figure}[h] 
\begin{center}
\includegraphics[scale=1.2]{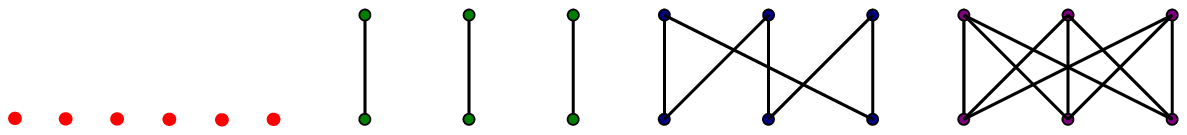}
\caption{.}\label{cuatro}
\end{center}
\end{figure}

In any case, every permutation of $\mathcal{O}$ is induced by an automorphism of $Q$. Let $H\leqslant \aut (Q)$ be the subgroup of automorphisms of $Q$ which leave $\mathcal{O}$ invariant. Then $|H|\ge 6$. If any other orbit of the action of $\aut (P)$ on $P$ has $1$ point, then the action of $H$ on $Q$ extends to an action on $P$, fixing every point not in $Q$. Thus $\aut (P)\neq \Z_3$. We deduce then that there is a third orbit with $3$ points, so $|P|\ge 9=3|G|$.

\bigskip

Given a finite group $G$ denote by $\alpha (G)$ the minimum number of vertices of a graph realizing $G$, and by $\beta (G)$ the minimum number of points in a poset realizing $G$. We have just proved that $\beta (\Z_3)=9$.

The number $\alpha (G)$ has been determined for all finite abelian groups by Arlinghaus \cite{Arl} based on work by Sabidussi and Meriwether. The analogous result has not yet been obtained for $\beta (G)$. The computation of $\alpha (\Z_{p^k})$ for $p$ prime and $k\ge 1$ is key in \cite{Arl}. It is summarized in the following theorem by Meriwether. 

\begin{teo} \cite[Theorem 5.4]{Arl}

$\alpha (\Z_{2})=2$.

$\alpha (\Z_{2^k})=2^k+6$ if $k\ge 2$.

$\alpha (\Z_{p^k})=p^k+2p$ if $p=3,5$.

$\alpha (\Z_{p^k})=p^k+p$ if $p\ge 7$ is a prime.

\end{teo}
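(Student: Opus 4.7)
The strategy is to prove matching upper and lower bounds in each case. For the upper bounds, start from the Cayley graph $\mathrm{Cay}(\Z_{p^k},\{\pm 1\})$, which is a $p^k$-cycle. Its automorphism group is the dihedral group $D_{p^k}\simeq \Z_{p^k}\rtimes \Z_2$, so one needs to attach a gadget that kills the reflection while preserving the cyclic symmetry. For $p\ge 7$ one can hang a single asymmetric rooted tree on $p$ new vertices at one vertex of the cycle; the translates of this tree under the cyclic rotation sit consistently on one side of each cycle vertex, so they naturally break the reflection. For $p=3,5$, a direct check shows that no gadget of only $p$ additional vertices can break the reflection without being carried back to itself by some unwanted symmetry, and one shows instead that $2p$ extra vertices suffice. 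For $p=2^k$ with $k\ge 2$, similar local obstructions combined with the involutive structure of $\Z_{2^k}$ force at least $6$ extra vertices, and an explicit $6$-vertex gadget can be exhibited.

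For the lower bounds the plan is orbit-theoretic. Let $\Gamma$ realize $\Z_{p^k}$ and decompose $V(\Gamma)=\bigsqcup \mathcal{O}_\alpha$ into $\Z_{p^k}$-orbits. Every orbit has size $p^{j_\alpha}$ for some $0\le j_\alpha\le k$, and since the subgroups of $\Z_{p^k}$ form a chain and the action of $\aut(\Gamma)$ on $V(\Gamma)$ is faithful, at least one orbit must have full size $p^k$. This already gives $|V(\Gamma)|\ge p^k$. The subgraph induced on a full-size orbit is a Cayley graph of $\Z_{p^k}$ on an inverse-closed connection set, so its own automorphism group always contains the involution $x\mapsto -x$. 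Hence further vertices are needed to destroy this extra symmetry, and careful analysis of the possible equivariant attachments of a smaller auxiliary orbit $\mathcal{O}'$ to the main $p^k$-cycle yields the extra counts $p$, $2p$, or $6$ depending on $p$.

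The main obstacle is the case analysis in the lower bound. One must enumerate all possibilities for how a smaller orbit $\mathcal{O}'$ of size $p^j$ with $j<k$ can attach to the main orbit: $\mathcal{O}'$ is stabilized pointwise by the unique subgroup of order $p^{k-j}$, which forces its adjacency pattern with the $p^k$-cycle to be invariant under the corresponding rotation, and hence to respect the full symmetry of a regular $p^j$-subpolygon inscribed in the cycle. Going through all such configurations and ruling out those with fewer than the claimed number of extra vertices---especially the boundary cases $p=3,5$ where the naive bound $p^k+p$ fails, and the case $p=2$ where many more involutions appear---is the delicate part, and is exactly where errors appeared in Sabidussi's original argument and were later corrected by Meriwether.
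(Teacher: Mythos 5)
This theorem is not proved in the paper at all: it is Meriwether's result, quoted from \cite[Theorem 5.4]{Arl}, so there is no internal argument to compare yours with. Judged on its own, your text is a plausible outline of the Sabidussi--Meriwether strategy, but it is not a proof; it has two concrete gaps. First, the upper-bound construction for $p\ge 7$ does not produce a graph of the claimed order. A pendant asymmetric tree on $p$ new vertices attached at \emph{one} vertex of the $p^k$-cycle is not rotation-invariant, and closing it up under the rotation creates $p^k$ disjoint translates, i.e.\ $p\cdot p^k$ new vertices, not $p$. What is actually needed is a single auxiliary \emph{orbit} of size $p$, on which $\Z_{p^k}$ acts through the quotient $q:\Z_{p^k}\to\Z_p$, joined to the cycle by a rotation-equivariant but reflection-breaking pattern (the graph analogue of the device used in Section \ref{abel} of this paper, where $(i,1)$ is put below $q(i)-1$, $q(i)$, $q(i)+2$). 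You would then still have to verify that the resulting graph admits no automorphisms beyond $\Z_{p^k}$ --- that the auxiliary orbit cannot be permuted nontrivially and that no automorphism mixes the two orbits --- none of which is addressed.

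Second, and more seriously, the lower bounds. Your orbit argument correctly forces one orbit of size $p^k$, and the observation that the induced circulant graph admits $x\mapsto -x$ can be completed to give $\alpha(\Z_{p^k})\ge p^k+p$ for odd $p$: a vertex fixed by all of $\Z_{p^k}$ has a rotation-invariant neighborhood inside the regular orbit, hence an inversion-invariant one, so if all other orbits were singletons the inversion would extend to the whole graph. (Even here you only assert the involution exists on the induced subgraph; you must check it extends.) But the statements that actually distinguish the four cases --- that $p$ extra vertices do \emph{not} suffice when $p=3,5$ so that $2p$ are needed, and that exactly $6$ extra vertices are needed when $p=2$, $k\ge 2$ --- are precisely what you defer as ``the delicate part.'' That case analysis, enumerating the possible equivariant attachments of a single orbit of size $p$ (or of the various small orbits available when $p=2$) and exhibiting a surviving reflection in each, is the entire content of Meriwether's theorem; without it the claimed equalities are not established.
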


In this section we will prove the following

\begin{teo} \label{ciclico} {\ }

$\beta (\Z_2)=2$.

$2^{k+1} \le \beta (\Z_{2^k}) \le 2^{k+1}+12$ if $k\ge 2$.

$2p^k\le \beta (\Z_{p^k}) \le 2p^k+3p$ if $p= 3,5$.

$2p^k \le \beta (\Z_{p^k}) \le 2p^k+p$ if $p\ge 7$ is a prime.

\end{teo}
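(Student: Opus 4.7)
For the lower bound $\beta(\Z_{p^k})\geq 2p^k$ (when $p^k\geq 3$), I plan an orbit analysis generalizing the $\Z_3$ argument already given in the paper. Let $P$ realize $\Z_{p^k}$ with $|P|<2p^k$ and let $\sigma$ generate $\aut(P)$; faithfulness gives a free $\Z_{p^k}$-orbit $O$ with $|O|=p^k$, and the cardinality bound forces every other orbit to have size at most $p^{k-1}$. The element $\sigma^{p^{k-1}}$ then lies in every point-stabilizer outside $O$, so it fixes $P\setminus O$ pointwise while permuting $O$ with $p^{k-1}$ orbits of size $p$. Consequently, for every $x\in O$ the up- and down-sets of $x$ in $P\setminus O$ coincide with those of $\sigma^{p^{k-1}}(x)$. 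So any permutation of $O$ that shuffles each of these $p^{k-1}$ blocks independently extends to an automorphism of $P$ by the identity on $P\setminus O$, giving a subgroup $(S_p)^{p^{k-1}}\leq\aut(P)$. This is non-abelian for $p\geq 3$, and non-cyclic (being $(\Z_2)^{2^{k-1}}$) for $p=2$, $k\geq 2$; either case contradicts $\aut(P)\cong\Z_{p^k}$. For $\Z_2$ the bound $\beta=2$ is immediate from the two-point antichain.

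For the upper bound, my plan is to construct for each case a three-level poset $P=A\sqcup B\sqcup C$ realizing $\Z_{p^k}$, where $A$ and $C$ are free $\Z_{p^k}$-orbits (so $|A|=|C|=p^k$) and the middle antichain $B$ has size $p$ when $p\geq 7$ (a single orbit through $\Z_{p^k}\twoheadrightarrow\Z_p$), $3p$ when $p=3,5$ (three such orbits), and $12$ when $p=2$ and $k\geq 2$. The cover relations between $A$ and $B$, and between $B$ and $C$, are circulant bipartite relations whose connection sets are taken directly from Meriwether's graph $\Gamma$ realizing $\Z_{p^k}$ as an automorphism group: essentially, one ``splits'' $\Gamma$ into a bottom and a top layer along the free-orbit vertices, with the extra $B$-vertices placed in the middle. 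The total point count is $2p^k+|B|$, matching the stated bound, and the diagonal $\Z_{p^k}$-action on $P$ gives the inclusion $\Z_{p^k}\leq\aut(P)$.

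To show $\aut(P)=\Z_{p^k}$, any automorphism $\varphi$ preserves heights and hence the partition $A\sqcup B\sqcup C$. An appropriate choice of connection sets makes $A$ and $C$ structurally distinguishable (for instance, by giving them different up- and down-degrees), so $\varphi$ restricts to permutations $\alpha,\beta,\gamma$ of the three levels. The rigidity of the bipartite circulant graphs $(A,B)$ and $(B,C)$---which is Meriwether's rigidity reused in the poset setting---then forces $\alpha,\beta,\gamma$ to be translations by a common element of $\Z_{p^k}$, so $\varphi\in\Z_{p^k}$ as desired.

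The main difficulty lies in engineering the middle layer $B$ and the connection sets so that every affine symmetry of $\Z_{p^k}$ (negation, multiplication by low-order units) is broken in both bipartite pieces at once. This is precisely the obstruction Meriwether had to overcome for graphs, and is what forces the middle layer to grow in the small-prime cases: the minimal orbit of size $p$ is enough for $p\geq 7$; the extra symmetries surviving for $p=3,5$ require two additional orbits of size $p$; and the large affine group of $\Z_{2^k}$, together with the potential $(\Z_2)^{2^{k-1}}$-obstruction exposed in the lower-bound argument, forces a middle layer of $12$ points in the case $p=2$.
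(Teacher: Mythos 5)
Your lower bound argument is correct and is essentially the paper's own: the paper likewise extracts a regular orbit $\mathcal{O}$ of size $p^k$, observes that if every other orbit were smaller then $g^{p^{k-1}}$ would fix $P\setminus\mathcal{O}$ pointwise, so that $x$ and $g^{p^{k-1}}x$ would have identical up- and down-sets, and derives a contradiction (the paper exhibits a single extra transposition where you exhibit the whole subgroup $(S_p)^{p^{k-1}}$, but it is the same idea). The claim $\beta(\Z_2)=2$ is also fine.

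The upper bound, however, has a genuine gap, and the construction as you describe it fails outright for $k\ge 2$. You place all cover relations of $P$ between the free orbits $A,C$ and the small middle antichain $B$, and you make them circulant ``through $\Z_{p^k}\twoheadrightarrow\Z_p$'', so that whether $a<b$ holds depends only on $q(a)\in\Z_p$ (and likewise for $C$). Then any two points of $A$ lying in the same fibre of $q$ have identical up-sets and down-sets, so every permutation of $A$ and of $C$ that preserves the fibres of $q$ extends by the identity on $B$ to an automorphism; this puts a copy of $(S_{p^{k-1}})^{2p}$ inside $\aut(P)$, which is incompatible with $\aut(P)\cong\Z_{p^k}$ as soon as $k\ge 2$. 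No choice of connection sets of the type you allow can repair this, because nothing in your poset records the cyclic structure of $\Z_{p^k}$ itself, only its reduction mod $p$. The paper supplies exactly the missing ingredient: the two free orbits are not joined through the gadget but directly to each other, forming a crown $Q=\Z_{p^k}\times\{0,1\}$ with $(i,1)>(i,0)<(i+1,1)$, whose automorphism group is the dihedral group of order $2p^k$; the small piece $Q'$ (a subdivided crown of order $3p$, or $12$ when $p=2$, or a $p$-point antichain attached by the asymmetric connection set $\{-1,0,2\}$ when $p\ge 7$) is then glued on via $q$ solely to kill the reflection. This is also where your appeal to Meriwether misleads you: his minimal graphs carry edges inside the free orbit (a cycle-like structure), and those are precisely the edges your bipartite splitting discards. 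Finally, even setting this aside, your upper bound is a plan rather than a proof: the connection sets are never specified and the rigidity is asserted by citation rather than verified, as your own closing paragraph concedes.
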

\begin{proof}
The claim  $\beta (\Z_2)=2$ is trivial. For the other cases we prove first the lower bounds. If $P$ is a finite poset with $\aut (P)=\Z_{p^k}$, there must be an orbit $\mathcal{O}$ with $p^k$ points, since otherwise every automorphism would have order dividing $p^{k-1}$. There must be at least one more orbit, since $\mathcal{O}$ is discrete with automorphism group $S_{p^k}$ and $p^k>2$. Let $g$ be a generator of $\Z_{p^k}$. Suppose that all the orbits different from $\mathcal{O}$ have order smaller than $p^k$. Then $g^{p^{k-1}}$ fixes all the points outside $\mathcal{O}$. In particular for any $x\in \mathcal{O}$ we have $P_{<x}=P_{<g^{p^{k-1}}x}$ and $P_{>x}=P_{>g^{p^{k-1}}x}$. Thus, there is an automorphism which switches two points of $P$, say $x_0$ and $g^{p^{k-1}}x_0$, and fixes all the other. This map is different from $g^i$ for all $i$, a contradiction. Thus, there is another orbit of cardinality $p^k$ and then $|P|\ge 2p^k$.

We prove now the upper bound for $p= 3,5$. We use in this case additive notation with $\Z_{p^k}$ being the integers modulo $p^k$. Let $Q$ be the (crown) poset with underlying set $\Z_{p^k}\times \{0,1\}$ with $(i,1)>(i,0)<(i+1,1)$ for every $i\in \Z_{p^k}$. Let $Q'$ be the (subdivided crown) poset of order $3p$ and automorphism group $\Z_p$ constructed in \cite{BM}: $Q'=\Z_p\times \{0,1,2\}$, $(i+1,0)<(i,2)>(i,1)>(i,0)$ for every $i\in \Z_p$. The underlying set of $P$ is the disjoint union of $Q$ and $Q'$. Let $q:\Z_{p^k}\to \Z_p$ be the projection. The order in $P$ is constructed by keeping the orders within $Q$ and $Q'$, adding the relations $(i,1)<(q(i),0)$ for each $i\in \Z_{p^k}$ and taking the transitive closure of this relation (see Figure \ref{nueve}). Clearly $|P|=2p^k+3p$.

\begin{figure}[h] 
\begin{center}
\includegraphics[scale=0.9]{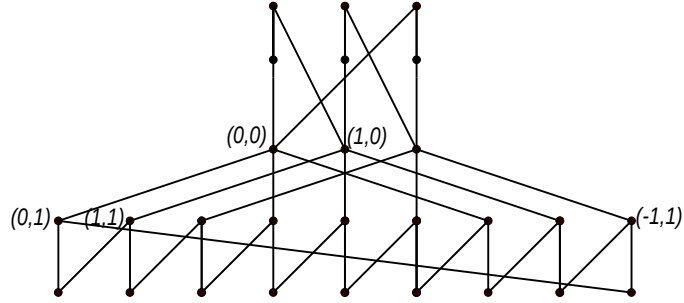}
\caption{A poset $P$ with $2p^k+3p$ points and cyclic automorphism group of order $p^k$. In this case $p=3$, $k=2$.}\label{nueve}
\end{center}
\end{figure}

It is clear that $\Z_{p^k}$ acts faithfully on $P$, by left multiplication on the first coordinate (precomposing with $q$ to act on $Q'$). In order to prove that $\aut (P)\simeq \Z_{p^k}$, we only need to prove that an automorphism $\varphi: P\to P$ fixing $(0,1)\in Q$, must be the identity. Since $(0,0)\in Q'$ is the unique point covering $(0,1)\in Q$, it must be fixed. Also, $\varphi$ preserves heights, so it maps $Q'$ into itself. Since $\varphi \restr_{Q'}: Q'\to Q'$ fixes a point, it is the identity. Now, $\varphi \restr _{Q}:Q\to Q$ is an automorphism of $Q$, but there are only two automorphisms fixing $(0,1)$. One is the identity and the other maps $(i,1)\mapsto (-i,1)$ for every $i\in \Z_{p^k}$. But $(1,0)\in Q'$ covers $(1,1)\in Q$ and does not cover $(-1,1) \in Q$, since $1\neq -1 \in \Z_p$. Since $\varphi$ fixes $(1,0)\in Q'$, then $\varphi \restr_{Q}=1_Q$, and we are done.

For the case $p=2, k\ge 2$, we change $Q'$ by taking $\Z_4 \times \{0,1,2\}$ with the order defined in the same way. Since $1\neq -1\in \Z_4$ the same argument holds and $|P|=2^{k+1}+12$. 

Finally we prove the upper bound for $p\ge 7$. We take $Q$ as defined in the case $p=3,5$, but choose $Q'$ with underlying set $\Z_p$ and discrete. To construct $P$ we add the relations $(i,1)\in Q$ is smaller than $q(i)-1,q(i),q(i)+2 \in Q'$ for $i\in \Z_{p^k}$ and take the transitive closure. We prove that an automorphism $\varphi: P\to P$ fixing $(0,1)\in Q$ has to be the identity. Suppose $\varphi$ induces the automorphism of $Q$ which maps $(i,1)$ to $(-i,1)$. Since $0$ covers $(0,1)$ and $(1,1)$, $\varphi (0)$ covers $(0,1)$ and $(-1,1)$, so $\varphi (0)=-1 \in \Z_p$. Analogously, $\varphi (-1)=0$. Since $\{-1,0,2\}=P_{>(0,1)}$ is invariant, $\varphi (2)=2$ and then the set $\{(0,1),(2,1),(3,1)\}$ of points covered by $2$ is invariant. But this is absurd, since $\varphi(2,1)=(-2,1) \neq (2,1), (3,1)$. Thus, $\varphi \restr _Q$ is the identity of $Q$. This implies that $\varphi$ induces the identity on $Q'$ as well, since each point in $Q'$ is uniquely determined by the points it covers.  
\end{proof}

\section{Open problems}

\begin{prob}
Is it possible to replace $4|G|$ by $3|G|$ in the statement of Theorem \ref{main}?
\end{prob}

\begin{prob}
Improve the statement of Theorem \ref{ciclico}: compute $\beta (G)$ for $G$ cyclic of prime power order. More generally, for $G$ finite cyclic and then for $G$ finite abelian.
\end{prob}

\begin{prob}
Establish a relationship between $\alpha (G)$ and $\beta (G)$.
\end{prob}

Babai's survey \cite{Bab2} of 1981 contains several results about graphs and lattices with a prescribed automorphism group. Many of these inspire questions which are open for general posets.

\end{document}